\newcommand{\be}{\begin{eqnarray}}
\newcommand{\ee}{\end{eqnarray}}\newcommand{\e}{{\varepsilon}}\newcommand{\Th}{{\theta}}\newcommand{\R}{{\mathbb R}}\newcommand{\Nat}{{\mathbb N}}\newcommand{\Hau}{{\mathcal H}}\newcommand{\Cant}{{\mathcal C}}\newcommand{\K}{{\mathcal K}}\newcommand{\Rk}{{\mathcal R}}\newcommand{\ccc}{a}
\newcommand{\Proj}{\operatorname{Proj}}\newcommand{\Fav}{\operatorname{Fav}}\newtheorem{theorem}{Theorem}\theoremstyle{definition}\theoremstyle{remark}\numberwithin{equation}{section}\input epsf.sty
\begin{document}\thispagestyle{empty}

\title[Buffon needle probability of the four-corner Cantor set]{{An estimate from below for the Buffon needle probability of the four-corner Cantor set}}

\author{Michael Bateman}\address{Michael Bateman, Department of Mathematics, Indiana University
\newline{\tt mdbatema@indiana.edu}}
\author{Alexander Volberg}\address{Alexander Volberg, Department of  Mathematics, Michigan State University and the University of Edinburgh
{\tt volberg@math.msu.edu}\,\,and\,\,{\tt a.volberg@ed.ac.uk}}

\thanks{Research of the authors was supported in part by NSF grants  DMS-0501067 (Volberg) and DMS-0653763 (Bateman)}
\subjclass{Primary: 28A80.  FractalsSecondary: 28A75,  Length, area, volume, other geometric measure theory           60D05,  Geometric probability, stochastic geometry, random sets           28A78  Hausdorff and packing measures}
\begin{abstract}Let $\Cant_n$ be the $n$-th generation in  the construction of the middle-half Cantor set. The Cartesian square $\K_n =  \Cant_n \times \Cant _n$ consists of $4^n$
squares of side-length $4^{-n}$. The chance that a long needle
thrown at random in the unit square will meet $\K_n$ is
essentially the average length of the projections of $\K_n$, also
known as the Favard length of $\K_n$. A classical theorem of
Besicovitch implies that the Favard length of  $\K_n$ tends to
zero. It is still an open problem to determine its exact rate of
decay. Until recently, the only explicit upper bound was $\exp(-
c\log_* n)$, due to Peres and Solomyak.  ($\log_* n$ is the number
of times one needs to take log to obtain a number less than $1$
starting from $n$). In \cite{NPV} the power estimate from above
was obtained. The exponent in \cite{NPV} was less than $1/6$ but
could have been slightly improved. On the other hand, a simple
estimate shows that from below we have the estimate $\frac{c}{n}$.
Here we apply the idea from \cite{katz}, \cite{BK} to show that
the estimate from below can be in fact improved to $c\,\frac{\log
n}{n}$. This is in drastic difference from the case of {\em
random} Cantor sets studied in \cite{PS}. \end{abstract}\maketitle

\section{{\bf Introduction}} \label{sec:intro}
\begin{figure}[htbp]\centerline{\epsfxsize=2.in \epsffile{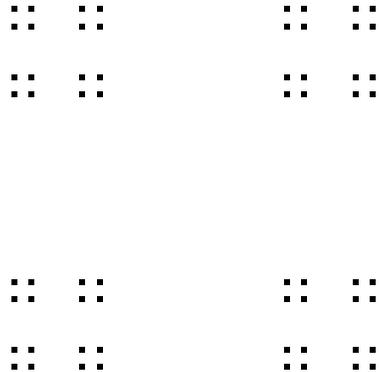}} \caption{$\K_3$, the third stage of the construction of $\K$.}\end{figure}
The four-corner Cantor set $\K$ is constructed by replacing the unit square by four sub-squares of side length $1/4$ at its corners, and
iterating this operation in a self-similar manner in each sub-square.  More formally,
consider  the set  $\Cant_n$ that is the union of $2^n$ segments:

\[
\Cant_n = \bigcup_{\ a_j \in \lbrace 0,3  \rbrace, j=1,..,n}\Bigl[\sum_{j=1}^n a_j 4^{-j}, \sum_{j=1}^n a_j 4^{-j} +4^{-n}\Bigr]\,,
\]
and  let the middle half Cantor set be
\[
\Cant :=\bigcap_{n=1}^{\infty} \Cant_n\,.
\]

 (It can also be written as
$\Cant = \lbrace \sum_{n=1}^\infty a_n 4^{-n}:\ a_n \in \lbrace 0,3 \rbrace \rbrace.$)  The four corner Cantor set $\K$ is the Cartesian square $\Cant \times \Cant$.

Since the one-dimensional Hausdorff measure of $\K$ satisfies
$0<\Hau^1(\K)<\infty$ and the projections of $\K$ in two distinct
directions have zero length, a theorem of Besicovitch (see
\cite[Theorem 6.13]{falc1})  yields that the projection of $\K$ to
almost every line through the origin has zero length. This is
equivalent to saying that the Favard length of $\K$ equals zero.
Recall (see \cite[p. 357]{besi}) that the {\bf Favard length} of a
planar set $E$ is defined by\begin{equation} \label{favdef}\Fav(E)
= \frac{1}{\pi} \int_0^\pi |\Proj \Rk_\Th
E|\,d\Th,\end{equation}where $\Proj$ denotes the orthogonal
projection from $\R^2$ to the horizontal axis,  $\Rk_\Th$ is the
counterclockwise rotation by angle $\Th$, and $|A|$ denotes the
Lebesgue measure of a measurable set $A \subset \R$. The Favard
length of a set $E$ in the unit square has a probabilistic
interpretation: up to a constant factor, it is the probability
that the ``Buffon's needle,'' a long line segment dropped at
random, hits $E$  (more precisely, suppose the needle's length is
infinite, pick its direction uniformly at random, and then locate
the needle in a uniformly chosen position in that direction, at
distance at most $\sqrt{2}$ from the center of the unit square).

The set $\K_n=\Cant_n^2$  is  a union of $4^n$ squares with side length $4^{-n}$ (see Figure 1 for a picture of $\K_3$). By the dominated convergence theorem, $\Fav(\K)=0$ implies  $\lim_{n\to\infty} \Fav(\K_n)=0$. We are interested in  good estimates for $\Fav(\K_n)$ as $n\to \infty$. A lower bound $\Fav(\K_n)\ge \frac{c}{n}$ for some $c>0$ follows from  Mattila \cite[1.4]{mattila1}.   Peres and Solomyak \cite{PS} proved that

\[
\Fav(\K_n) \le C \exp[-\ccc\log_*n]\ \ \ \ \mbox{for all}\ \ n \in \Nat,\] where  \[\label{def-logstar}\log_* n = \min\left\lbrace k\ge 0:\ \underbrace{\log\log\ldots \log}_{k}n \le 1 \right \rbrace\,.
\]

This result can be viewed as an attempt to make  a quantitative statement out of a qualitative Besicovitch projection theorem \cite{besi}, \cite{T}, using this canonical example of the Besicovitch irregular set.

It is very interesting to consider quantitative analogs of Besicovitch theorem in general. The reader can find more of that in \cite{T}.

In \cite{NPV} the following estimate from above was obtained
\[
\Fav(\K_n) \le \frac{C_{\tau}}{n^{\tau}}\,,
\]
where $\tau$ was strictly less than $1/6$. This can be slightly improved, but it is still a long way till $\tau =1$. Here we show, using the idea of \cite{BK}, \cite{katz}, that $\tau=1$ is impossible.

\begin{theorem}
\label{th-corner}
There exists $c>0$ such that
\begin{equation}
\label{log}
\Fav(\K_n) \ge c\,\frac{\log n}{n}\ \ \ \ \mbox{for all}\ \ n\in \Nat.
\end{equation}
\end{theorem}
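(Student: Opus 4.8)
The plan is to express everything through the one–dimensional self–differences of the Cantor set and then to win a logarithmic factor over Mattila's bound by controlling the \emph{distribution} of the projection length, not merely its mean. Since $\Proj\Rk_\theta(x,y)=x\cos\theta-y\sin\theta$, one has $\Proj\Rk_\theta\K_n=\cos\theta\,\Cant_n-\sin\theta\,\Cant_n$ (a Minkowski difference), and the recursion $\Cant_n=\tfrac14(\{0,3\}\oplus\Cant_{n-1})$ gives the self–similar identity
\begin{equation*}
\Proj\Rk_\theta\K_n=\tfrac14\bigl(D_\theta\oplus\Proj\Rk_\theta\K_{n-1}\bigr),\qquad D_\theta=\{0,\,3\cos\theta,\,-3\sin\theta,\,3\cos\theta-3\sin\theta\}.
\end{equation*}
Thus $g_n(\theta):=|\Proj\Rk_\theta\K_n|$ is obtained by overlaying four translated copies of the previous projection; one has $g_n\le g_{n-1}$, and a genuine loss of measure at level $k$ occurs only when two of the four shifts in $D_\theta$ nearly coincide, i.e. only when $\theta$ is close (at the scale resolved by $g_{k-1}$) to one of the degenerate directions $\{0,\pi/4,\pi/2,3\pi/4\}$. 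Writing $\mu_n$ for the natural probability measure on $\K_n$ and $f_\theta$ for the density of its projection mollified at scale $4^{-n}$, Cauchy--Schwarz gives $g_n(\theta)\gtrsim 1/\!\int f_\theta^2=4^n/P_\theta$, where $P_\theta$ counts pairs of the $4^n$ squares whose projections overlap; a count organised by the distance between squares yields $\int_0^\pi P_\theta\,d\theta\sim n\,4^n$, and Jensen's inequality then reproduces exactly Mattila's $\Fav(\K_n)\gtrsim 1/n$. The whole point is that this last step is wasteful unless $P_\theta$ is essentially constant in $\theta$, so the improvement must come from showing that $P_\theta$ (equivalently $g_n$) is spread out across directions.

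Accordingly, I would reduce Theorem~\ref{th-corner} to the following distributional estimate: there is $c'>0$ such that
\begin{equation*}
\bigl|\{\theta\in[0,\pi):\ g_n(\theta)>c/u\}\bigr|\ \ge\ c'\,\frac{u}{n}\qquad\text{for all}\ 1\le u\le n.
\end{equation*}
Granting this, the theorem follows from the layer--cake formula: substituting $s=c/u$,
\begin{equation*}
\Fav(\K_n)=\frac1\pi\int_0^\infty\bigl|\{g_n>s\}\bigr|\,ds\ \ge\ \frac1\pi\int_1^n\bigl|\{g_n>c/u\}\bigr|\,\frac{c}{u^2}\,du\ \ge\ \frac{c\,c'}{\pi}\int_1^n\frac{du}{n\,u}=\frac{c\,c'}{\pi}\,\frac{\log n}{n}.
\end{equation*}
The role of the range $1\le u\le n$ is exactly to turn the harmonic integral $\int_1^n du/u$ into the gained factor $\log n$.

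To prove the distributional estimate I would iterate the recursion and track the multiplicative losses $\rho_k(\theta)=g_k(\theta)/g_{k-1}(\theta)\in[\tfrac14,1]$, so that $g_n=\prod_{k=1}^n\rho_k$ and $g_n>c/u$ amounts to substantial loss ($\rho_k$ bounded away from $1$) occurring at no more than $\sim\log u$ of the $n$ levels. By the remark above, a substantial loss at level $k$ forces $\theta$ into a neighbourhood of width $\sim 4^{k-n}$ of a degenerate direction, and more generally an overlap of many square–pairs at distance $\sim 4^{-k}$ forces $\theta$ near a low–height rational slope. The task is then to show that these ``bad at scale $k$'' events are sufficiently independent across $k$ that the measure of $\theta$ which are bad at at least $t$ scales decays like a large–deviation/binomial tail, yielding precisely $\bigl|\{\#\text{bad}\le\log u\}\bigr|\gtrsim u/n$. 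This is where I would import the combinatorial machinery of \cite{BK} and \cite{katz}: organise the directions along the $4$–adic tree, interpret the scale–$k$ overlaps as the branches that split at depth $n-k$, and run the slicing/counting argument that bounds, uniformly in the slope, how many such splits a single direction can accumulate.

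The main obstacle is exactly this joint control across scales. Bounding the overlaps at a \emph{single} scale is elementary and only reproduces the $1/n$ bound (it is the content of the second–moment computation above); the entire gain of $\log n$ hinges on proving that alignment of many pairs at one scale does not propagate to alignments at the other scales except on a set of directions of controllably small measure. Establishing this near–independence — for which the rigidity of the four–corner set, as opposed to a random Cantor set, is essential (cf. the contrast with \cite{PS}) — is the crux, and it is precisely here that the counting lemmas of \cite{BK} and \cite{katz} have to be adapted to the present geometry.
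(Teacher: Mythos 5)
Your warm-up computation (Cauchy--Schwarz against the second moment, with pairs organized by the scale of their common $4$-adic square) reproduces the paper's proof of the $\frac{c}{n}$ bound, and your layer-cake reduction of \eqref{log} to a distributional estimate is arithmetically sound. But the distributional estimate itself is never proved, and the mechanism you propose for it rests on a false lemma. It is not true that a substantial loss $\rho_k(\theta)<1-\epsilon$ at level $k$ forces $\theta$ near the four directions $\{0,\pi/4,\pi/2,3\pi/4\}$: in the overlay $\Proj \Rk_\theta \K_k=\frac14\bigl(D_\theta\oplus \Proj \Rk_\theta \K_{k-1}\bigr)$, each translated copy has diameter comparable to the translations themselves, so the copies lose measure against each other for a large set of $\theta$ even when the four points of $D_\theta$ are well separated --- overlaps occur near every rational slope in a complicated, scale-dependent pattern (cf.\ Kenyon \cite{kenyon} for the Sierpi\'nski analogue). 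Indeed, since $\int g_n\,d\theta\to 0$, ``bad'' scales are generic in $\theta$, not rare, so the proposed large-deviation/binomial-tail argument has no basis: the bad events are neither rare nor anything like independent across scales, and neither \cite{katz} nor \cite{BK} supplies such a tail bound. This is a genuine gap, not a routine adaptation.

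What the paper actually does --- and what your sketch never isolates --- is the opposite of a global distributional analysis: it localizes to a single ``magic'' direction. Take the axis $0X$ at angle $\arctan\frac12$ to the horizontal; in this direction the projections of all $4^n$ squares are pairwise \emph{disjoint} and generate a $4$-adic grid, so each square is coded by its projected center $s$ with transverse coordinate $y_s$. For angles $\theta\in J_j=[c_14^{-j},c_24^{-j}]$ measured from $0X$, any pair of squares with overlapping projections must be a $(j,k)$-pair, and the trivial-but-crucial inequality $|y_{s_1}-y_{s_2}|\le C'\,d(s_1,s_2)$ of \eqref{crucial}, comparing the transverse coordinate to the $4$-adic distance, caps the number of such pairs by $C\,4^{2n-k-2j}$. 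Summing over $k$ bounds the second moment on $J_j$ by $C\,n4^{-2j}$ against a first moment $\asymp 4^{-j}$, so Cauchy--Schwarz gives $\int_{J_j}|E_{n,\theta}|\,d\theta\ge \frac{c}{n}$ for every $j\le \log n$, and summing these $\log n$ disjoint angular windows yields \eqref{log}. Note that in your language this only produces superlevel sets $\{g_n\gtrsim 4^{j}/n\}$ of measure $\gtrsim \frac1n$ each, concentrated near one direction --- weaker than your claimed $|\{g_n>c/u\}|\ge c'u/n$, yet already sufficient; your stronger claim remains unproven and does not follow from the paper. To repair your proposal, replace the global tail estimate by this window-by-window second-moment computation near $\arctan\frac12$, which is precisely where the counting idea of \cite{katz}, \cite{BK} enters.
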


\noindent{\bf Remark.}  This result is somewhat surprising in light of the probabilistic result in \cite{PS}.  There, the authors consider planar  Cantor sets constructed randomly as follows.  Starting from the unit square $U$, divide $U$ into four equal squares $U_1, U_2, U_3, U_4$.  Similarly divide each of these into four squares $U_{j1}, U_{j2}, U_{j3}, U_{j4}$.  For each $j$, randomly choose one square $U_{jk}$ (of side length $ {1 \over {16} }$ ).  The four chosen squares form the first level $ \tilde{ \K } _1 $.  Repeat this process, always choosing the next generation randomly.  The authors in \cite{PS} show that one expects
\[   { 1 \over  {Cn} } \leq \Fav( \tilde { \K_n }  )  \leq { C \over n} .\]

\begin{proof}

The proof is an immediate corollary of the idea of \cite{katz} if one applies the duality between Cantor sets and Kakeya sets from \cite{mattila2}.
As a ``warm-up" we are going to prove a much simpler estimate
\begin{equation}
\label{simpler}
\Fav(\K_n) \ge \frac{c}{n}\ \ \ \ \mbox{for all}\ \ n\in \Nat\,.
\end{equation}
This does not require \cite{BK}, \cite{katz}.

In what follows the square means only the Cantor square.  Let $L_{\theta}$ be the line passing through the origin at an angle $\theta$ with the $x$-axis.
Let $f_{n, \theta}(x)$ denote the number of squares in $\K _n$ whose orthogonal projection onto the line $L_{\theta}$ contains a point $x$ of this line.
For each square $Q$ of size $4^{-n}$ let $\chi_{Q, \theta}(x) $ be the characteristic function of the projection of $Q$ onto $L_{\theta}$.
Then $f_{n, \theta}(x) =\sum_{Q, \ell(Q)=4^{-n}} \chi_{Q, \theta}(x) $ .
Therefore,
\begin{equation}
\label{L1}
\int \int f_{n, \theta}(x) dx\,d\theta \asymp 4^n\cdot 4^{-n} =1\,.
\end{equation}

Let us denote the support of $f_{n, \theta}(x)$  by $E_{n, \theta}$,   $|E_{n, \theta}|$ being its length.

Knowing the first and second  moment of $f_{n, \theta}(x)$ we can estimate
$\int |E_{n, \theta}|\,d\theta$ by using Cauchy inequality twice:
$$
1\asymp \int\int f_{n,\theta} dx d\theta \leq \int |E_{n,\theta}|^{\frac12} (\int f_{n,\theta}^2(x) dx)^{\frac12}\, d\theta \le
$$
$$
(\int |E_{n,\theta}|\,d\theta)^{\frac12}  (\int \int f_{n,\theta}^2(x) dx\,d\theta)^{\frac12}\,.
$$

Hence,
\begin{equation}
\label{sn}
\int |E_{n,\theta}|\,d\theta \ge c\, \frac{1}{\int\int  f_{n,\theta}^2(x) dx\,d\theta}\,.
\end{equation}
Now
$$
\int\int  f_{n,\theta}^2(x) dx\,d\theta = \sum_{Q, Q',  \ell(Q)=\ell(Q')=4^{-n}}\int \int\chi_{Q, \theta}(x)  \chi_{Q', \theta}(x) dx\,d\theta\,.
$$
So for each pair $P=(Q, Q'),  \,\,\ell(Q)=\ell(Q')=4^{-n}$ ($Q$ and $Q'$ may coincide) we consider
\begin{equation}
\label{pP}
p_P := \int |Proj_{\theta}Q\cap Proj_{\theta}Q'|\,d\theta\,.
\end{equation}

Let us make an order on pairs. We call a pair $P$ a $k$--pair if $ Q, Q'$ are in a $4^{-k}$--square, but not in any $4^{-k-1}$--square, $k=0,1,...,n$.
We have $4^k$ of $4^{-k}$--squares, so we have $\asymp 4^k\cdot (4^{n-k})^2$ $k$--pairs. For each $k$--pair $P$ we obviously have
$$
p_P \le C\, 4^{-n} 4^{k-n}\,.
$$
Putting this together we get
$$
\int\int  f_{n,\theta}^2(x) dx\,d\theta = \sum_{P}p_P =\sum_{k=0}^{n} \sum_{P\,\text{is a}\, k-\text{pair}}p_P
$$
$$
 \le C\,\sum_{k=0}^{n-1} \sum_{P\,\text{is a}\, k-\text{pair}}4^{-n} 4^{k-n}\le
C\, \sum_{k=0}^{n-1}4^k\cdot (4^{n-k})^24^{-n} 4^{k-n}\le C\,n\,.
$$
This estimate and \eqref{sn} give us
$$
\int |E_{n,\theta}|\,d\theta \ge \frac{c}{n}\,.
$$

\bigskip

To prove \eqref{log} one needs to count pairs in a much more
interesting way, which one gets from \cite{BK}.

First we consider axis $0X$, where $0$ is the origin and the axis has angle $\arctan\frac12$ with the horizontal axis. We also need $0Y$, the orthogonal axis.
Project original unit square on $0X$. We obtain the segment
 $I_0:=[0,L], L=\sqrt{2}\cos (\frac{\pi}{4}-\arctan\frac12)$ on
 $0X$.
Notice that projections of Cantor squares of size $4^{-k}$, $k=0,...,n$, generate the $4$-adic structure on $I_0=[0,L]$. Segments of these
$4$-adic structure will be called $I_{\sigma}$, where $\sigma$ is the word of length at most $n$ in the alphabet of $\{0,1,2,3\}$.

We have $4^n$ points that are the projections of the centers of $4^n$ squares  $Q$ of size $4^{-n}$.
 We will call this set  $S$, and use the notation $s$
(maybe with indices) for elements  of $S$. Each $s$ recovers its $Q_s$ uniquely. Let $y_s$ be the $0Y$ coordinate of the center of $Q_s$.  Note that each $s$ is the center of an interval $I_{\sigma}$, and that the projections of all cubes $Q$ onto this axis are disjoint.  This is an important feature of the argument.

Along with the usual Euclidean distance $|s_1-s_2|$ between the points $s_1, s_2\in S$,
we have another very simple distance which will play {\em the crucial role} in proving  \eqref{log}. Namely,
$$
d(s_1,s_2) := \min \{ | I_{\sigma}|, \,s_1\in I_{\sigma} \,s_2\in I_{\sigma}\} \,.
$$
This is just the usual $4$-adic distance scaled by $L$.
Of course $|s_1-s_2|\le d(s_1, s_2)$.

\bigskip

For $j=0,1,..,\log n$, $k\in [-n+ j, 0],$ we call pair $P$ a $(j,k)$-pair , if
$$
\frac{|s_1-s_2|}{|y_{s_1}-y_{s_2}|}\asymp 4^{-j}\,,
|s_1-s_2| \asymp 4^{-k-j}\,.
$$

Now the pair $P=(Q,Q')$ of squares of size $4^{-n}$ is just a pair $(s_1,s_2),s_i\in S$.

For every $(j,k)$-pair $P=(s_1,s_2)$ one immediately has
\begin{equation}
\label{pP1}
p_P \le C\, \frac{1}{4^n} \cdot \frac{4^{-n}}{ |y_{s_1} - y_{s_2}|}\,.
\end{equation}
where $p_P$ is as in (\ref{pP} ).
Now we want to estimate the number $A_{j,k}$ of all $(j,k)$-pairs. If $(s_1,s_2)$ is a $(j,k)$-pair, then
$$
|s_1-s_2| \asymp 4^{-k-j}\,
$$ But also
$$
4^j |s_1-s_2|\le C\, |y_{s_1}-y_{s_2}| \,,
$$
and
\begin{equation}
\label{crucial}
 |y_{s_1}-y_{s_2}| \le C' d(s_1,s_2)\,.
 \end{equation}
The last inequality is {\em obvious} but it is the most crucial for the proof!

This is because  we just obtained  $d(s_1,s_2) \ge c 4^{-k}$. How many $4$-adic intervals are such that  $d(s_1,s_2) \ge c 4^{-k}\ge 4^{-k-a}$  ($a$ is absolute), and $|s_1-s_2| \asymp 4^{-k-j}$?
Corresponding two $4$-adic  intervals of size $4^{-n}$ should be both in $C\,4^{-k-j}$-neighborhood of the $4$-adic points of $1, 2, 3,..., k, k+1, .., k+a$-generations.
We have $4, 4^2,..., 4^{k+a}$ such points correspondingly.

Therefore,
$$
A_{j,k} \le C\, \sum_{m=0}^{k+a} 4^m (\frac{4^{-k-j}}{4^{-n}})^2 = C\,4^{2n-k-2j}\,
$$

Another way to count the number of $(j,k)$ pairs is as follows.

Note that if $j=0$, there would be $4^{2(n-k) }4^k$ such pairs, since there are $4^k$ intervals of
length $4^{-k}$, and each contains $4^{2(n-k)}$ pairs of intervals of length $4^{-n}$.
 Increasing $j$ by $1$ decreases the number of pairs by a factor of ${ 1 \over {4^2} }$.
 One can see this by noting that if a pair $s_1$, $s_2$, satisfies these conditions,
 then the $4$-adic expansions of $s_1$ and $s_2$ are almost uniquely determined for $j$ digits.
 Hence

$$
A_{j,k} \leq C 4^{2n-k - 2j}.
$$
Using this and \eqref{pP1} we get
$$
\sum_{p\in (j,k)-\text{pairs}} p_P \le C\, 4^{2n-k-2j} \frac{4^{-2n}}{4^{-k}} \asymp 4^{-2j}\,.
$$
The union of all  $(j,k)$-pairs over all $k$ is called: $\mathcal{P}_j'$.

So fix $j$, and get
\begin{equation}
\label{j}
\sum_{p\in \mathcal{P}_j'} p_P =\sum_{k=-n+ j}^{0}\sum_{p\in (j,k)-\text{pairs}} p_P\le C\, \frac{n}{4^{2j}}\,.
\end{equation}

\bigskip

Now let $J_j := [c_1 4^{-j}, c_2 4^{-j}]$, where $c_1$ is sufficiently small and $c_2$is sufficiently large.
These are intervals of {\em angles} $\theta$ with respect to the axis $0X$, where zero angle means we are line parallel to the axis $0X$.

\bigskip

Here is a crucial geometric observation:

\begin{multline}
\label{crucialgeometric}
\text{If}\,\,\,P=(Q,Q'), Q\neq Q' \,\,\,\text{ is so that}\,\,\,
\\Proj_{\theta} Q \cap Proj_{\theta} Q' \neq \emptyset\,\,,
\theta\in J_j\,\,\,\text{ then }\,\,\,\,P\in \mathcal{P}_j'\,.
\end{multline}

Let us throw into $\mathcal{P}_j' $ also all $(Q,Q)$ pairs. The
resulting collection is called $j$-pairs: $\mathcal{P}_j$. As
$$
\int_{J_j} |E_{n,\theta}|\, d\theta \ge c\, \frac{(\int_{J_j}\int f_{n,\theta}dx\,d\theta)^2}
{\int_{J_j}\int f^2_{n,\theta}dx\,d\theta}\,,
$$
\begin{multline}
\label{Pj}
\int_{J_j}\int f^2_{n,\theta} dx\,d\theta\le \sum_{p\in \mathcal{P}_j} p_P\le C\, \frac{n}{4^{2j}}+\\
\int_{J_j} \int\sum_{P=(Q,Q), \ell(Q)=4^{-n}}\chi_{Q,\theta}(x)dx\,d\theta \le C\, \frac{n}{4^{2j}}+C\,4^{-j}4^n4^{-n}\le C\, \frac{n}{4^{2j}}\,,
\end{multline}
and
$$
\int_{J_j}\int f_{n,\theta}dx\,d\theta \le C\, |J_j|\cdot 4^n \cdot 4^{-n} \asymp 4^{-j}\,,
$$
we combine this to obtain
\begin{equation}
\label{Jj}
\int_{J_j} |E_{n,\theta}|\, d\theta \ge c\, 4^{-2j}\frac{4^{2j}}{n} =\frac{c}{n}\,.
\end{equation}

\noindent{\bf Remark.} Notice that if \eqref{Pj} stops to be valid if $j> \log_4 n + Const$.  This explains why we did not get a better estimate from below than that in the Theorem.

Summing \eqref{Jj} over $j=0,...,\log n$ we obtain \eqref{log}. Theorem is completely proved.

\end{proof}

\bigskip

\section{Median value of $|E_{n, \theta}|$}
\label{median}

\noindent{\bf Question 1}. What is the median value of   $|E_{n, \theta}|$?

Let us call this median value $M_n$. We can prove the following simple theorem, which immediately implies \eqref{simpler} of course.

\begin{theorem}
\label{medi}
$M_n \ge \frac{c}{n}\,.$
\end{theorem}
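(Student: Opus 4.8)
The plan is to derive the median bound directly from the two moment estimates already established in the warm-up, combined with a pointwise Cauchy--Schwarz inequality and Markov's inequality. The first observation I would record is that the first moment of $f_{n,\theta}$ is bounded below \emph{uniformly} in $\theta$, not merely on average: since
\[
\int f_{n,\theta}(x)\,dx = \sum_{Q,\ \ell(Q)=4^{-n}} |\Proj_\theta Q|
\]
and each of the $4^n$ squares $Q$ of side $4^{-n}$ has projection length in $[4^{-n},\sqrt{2}\,4^{-n}]$, we get $\int f_{n,\theta}\,dx \asymp 1$ for every individual $\theta$. Writing $G(\theta) := \int f_{n,\theta}^2(x)\,dx$ and applying Cauchy--Schwarz at a fixed angle to the support $E_{n,\theta}$ of $f_{n,\theta}$,
\[
|E_{n,\theta}| \;\ge\; \frac{\left(\int f_{n,\theta}\,dx\right)^2}{\int f_{n,\theta}^2\,dx} \;\ge\; \frac{c}{G(\theta)}\,,
\]
so a lower bound on $|E_{n,\theta}|$ at a fixed $\theta$ reduces to an upper bound on $G(\theta)$ at that same $\theta$.

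Second, I would invoke the warm-up computation, which already yields $\int G(\theta)\,d\theta \le C\,n$; this is precisely the estimate $\int\int f_{n,\theta}^2\,dx\,d\theta \le C\,n$ obtained there by counting $k$-pairs. The crucial point is that this is an $L^1$ bound in the variable $\theta$, so by Markov's inequality applied to the normalized measure $\frac{1}{\pi}\,d\theta$ the set of ``bad'' angles where $G(\theta)$ exceeds $3C\,n$ has measure at most $\frac{1}{3}$. Hence $G(\theta) \le 3C\,n$ on a set of angles of normalized measure at least $\frac{2}{3}$, and on this set the Cauchy--Schwarz bound above gives $|E_{n,\theta}| \ge c/(3Cn) =: c'/n$.

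Finally I would convert this into the median statement. We have shown $\Prob\bigl(\theta : |E_{n,\theta}| \ge c'/n\bigr) \ge \tfrac{2}{3} > \tfrac12$. Since any median $M_n$ satisfies $\Prob(|E_{n,\theta}| \le M_n) \ge \tfrac12$, the event $\{|E_{n,\theta}| \ge c'/n\}$ having probability strictly above $\tfrac12$ forces $M_n \ge c'/n$: otherwise $\{|E_{n,\theta}| \le M_n\} \subseteq \{|E_{n,\theta}| < c'/n\}$ would have probability below $\tfrac12$, a contradiction. This gives $M_n \ge c/n$, and because $M_n \ge c/n$ means $|E_{n,\theta}| \ge c/n$ on a set of angles of measure at least $\tfrac{\pi}{2}$, it also recovers $\int |E_{n,\theta}|\,d\theta \gtrsim \tfrac{1}{n}$, that is \eqref{simpler}.

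As for the main obstacle, there is essentially no deep difficulty once the second-moment estimate of the warm-up is available: the entire content is the observation that this estimate controls $G(\theta)$ in $L^1(d\theta)$ rather than merely bounding the global double integral, so that Markov's inequality upgrades an average statement into one valid for a \emph{majority} of angles. The one point requiring care is securing the \emph{strict} inequality $\Prob(|E_{n,\theta}| \ge c'/n) > \tfrac12$ (obtained by taking the Markov threshold slightly above $2Cn$); this strictness is exactly what legitimizes the conclusion about the median rather than just the average.
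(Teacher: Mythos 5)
Your proof is correct, and it reaches the paper's conclusion by a slightly different route that is worth noting. The skeleton is shared: both arguments combine the pointwise Cauchy--Schwarz bound $|E_{n,\theta}|\ge \bigl(\int f_{n,\theta}\,dx\bigr)^2\big/\int f_{n,\theta}^2\,dx$ with a Tchebyshev/Markov step to pass from an $L^1(d\theta)$ estimate to a statement about the median. The difference is in how the key $L^1$ input is produced. The paper proves $\int \frac{1}{|E_{n,\theta}|}\,d\theta \le C\,n$ by going through Mattila's duality: it bounds the Riesz energy $\int\int |z-\zeta|^{-1}\,d\mu_n(z)\,d\mu_n(\zeta)\le C\,n$, converts it into $\int\int_{E_{n,\theta}} f_{n,\theta}^2\,dx\,d\theta \le C\,n$ via the mollifier $\Phi_\e$ and Fatou's lemma, and only then applies Cauchy--Schwarz and Tchebyshev to $1/|E_{n,\theta}|$. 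You bypass this machinery entirely by reusing the warm-up's combinatorial $k$-pair count, which already gives $\int\int f_{n,\theta}^2\,dx\,d\theta\le C\,n$ directly, and you apply Markov to $G(\theta)=\int f_{n,\theta}^2\,dx$ rather than to $1/|E_{n,\theta}|$ --- a purely cosmetic difference since $1/|E_{n,\theta}|\lesssim G(\theta)$ by your uniform first-moment bound. Your version is more self-contained (the energy computation in the paper is in any case the same counting, ``for each square $Q$ there are $4^{n-k}$ squares at distance $4^{-k}$,'' in disguise), and you are also more careful on one point: the paper invokes \eqref{L1}, which is an averaged-in-$\theta$ statement, at a step where the pointwise bound $\int f_{n,\theta}\,dx\asymp 1$ for \emph{every} $\theta$ is what is actually needed; you state and justify this uniform bound explicitly via $\sum_Q|\Proj_\theta Q| = 4^n\cdot 4^{-n}(|\cos\theta|+|\sin\theta|)\in[1,\sqrt2\,]$. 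What the paper's formulation buys in exchange is the stronger intermediate estimate $\int |E_{n,\theta}|^{-1}\,d\theta\le C\,n$ and the connection to the general energy/projection framework of Mattila, which is conceptually useful beyond this particular theorem.
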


\begin{proof}
We are going to prove
\begin{equation}
\label{over}
\int \frac{1}{ |E_{n, \theta}|} \, d\theta \le C\, n\,.
\end{equation}
 If one uses Tchebyshev's inequality
this immediately gives $M_n\ge \frac{c}{n}$.

To prove  \eqref{over} we use \cite{mattila2}.  Let us fix a small positive $\e$, and let $\mu_n$ be an equidistributed measure on $\Cant_n$. Let $Proj_{\theta}$ stand (as always) for the orthogonal projection onto line $L_{\theta}$.
Notice that given two points $z, \zeta\in \mathbb{C}$ we have
$$
\frac{\e}{|z-\zeta|} \asymp |\{\theta : |Proj_{\theta}(z) - Proj_{\theta}(\zeta)| \le \e\||\,.
$$
Using this we write
$$
\int\int \frac{\e}{|z-\zeta|} d\mu_n(z) d\mu_n(\zeta) \asymp \int\int  |\{\theta : |Proj_{\theta}(z) - Proj_{\theta}(\zeta)|\le\e\}| d\mu_n(z) d\mu_n(\zeta)
$$
Introduce
$$
\Phi_{\e}(x)=\begin{cases} 1\,,\,\text{if}\,\, 0\le x\le \e\\
0\,,\,\,\text{otherwise}\end{cases}
$$
Then we repeat
$$
\int\int \frac{\e}{|z-\zeta|} d\mu_n(z) d\mu_n(\zeta) \asymp\int\,\int\int \Phi_{\e}(|Proj_{\theta}(z) - Proj_{\theta}(\zeta)|)d\mu_n(z) d\mu_n(\zeta)  d\theta  =
$$
$$
\int\,\int\int \Phi_{\e}(|x - y|)d\mu_{n,\theta}(x) d\mu_{n,\theta}(y)  d\theta \,,
$$
where $d\mu_{n,\theta}$ is the projection of the measure $\mu_n$ on the line $L_{\theta}$. In our old notation
\begin{equation}
\label{density}
d\mu_{n,\theta} = f_{n,\theta}(x)\,dx\,.
\end{equation}
Of course
$$
\int \Phi_{\e}(|x - y|)d\mu_{n,\theta}(y) = \mu_{n,\theta}(B(x,\e))\,,
$$
and
finally we get
\begin{equation}
\label{energy}
\int\int \frac{1}{|z-\zeta|} d\mu_n(z) d\mu_n(\zeta)\ge c\,\int \int \frac{ \mu_{n,\theta}(B(x,\e))}{\e} \,d\mu_{n,\theta}(x)\,d\theta\,.
\end{equation}
The left hand side is $\le C\,n$. One can see this by noting that for each square $Q$ of side length $4^{-n}$ in $\K _n$, and for each  $k=0,1,...,n$,  there are $4^{n-k}$ squares $Q`$ at distance $4^{-k}$.

In \eqref{energy} we now use Fatou's lemma:
\begin{equation}
\label{energy1}
\int \liminf_{\e\rightarrow 0} \frac{ \mu_{n,\theta}(B(x,\e))}{\e} \,d\mu_{n,\theta}(x)\,d\theta\le C\, n\,.
\end{equation}
Recalling \eqref{density} we obtain
\begin{equation}
\label{energy2}
\int \int_{E_{n, \theta}}f_{n,\theta}(x)^2 \,d x\,d\theta\le C\, n\,.
\end{equation}
Recalling \eqref{L1} we can rewrite it as
\begin{equation}
\label{energy3}
\int\frac{ \int_{E_{n, \theta}}f_{n,\theta}(x)^2 \,d x}{(\int_{E_{n, \theta}}f_{n,\theta}(x)\,dx)^2} \,d\theta\le C\, n\,.
\end{equation}
By Cauchy inequality
$$
\frac{1}{|E_{n,\theta}|} \le \frac{ \int_{E_{n, \theta}}f_{n,\theta}(x)^2 \,d x}{(\int_{E_{n, \theta}}f_{n,\theta}(x)\,dx)^2}\,.
$$
Combine this and \eqref{energy3} and obtain the desired estimate
$$
\int \frac{1}{|E_{n,\theta}|} \,d\theta \le C\,n\,.
$$
Inequality \eqref{over} and, therefore,  Theorem \ref{medi} are completely proved.

\end{proof}

\section{Sierpi\'nski's Cantor set}
\label{Sierp}

Consider now another Cantor set, which, by analogy with Sierpi\'nski's gasket, we call Sierpi\'nski's Cantor set $\mathcal{S}$.
We take an equilateral triangle with side lenghth $1$, leave $3$ triangles of size $1/3$ at each corner, and then continue this for $n$ generations. On step $n$
we get $3^n$ equilateral tringles of size $3^{-n}$. Call this union of triangles $\mathcal{S}_n$. Its  intersection is $\mathcal{S}$,
$$
0<H^1(\mathcal{S}) <\infty\,,
$$
and this is a Besicovitch irregular set, so, by Besicovitch projection theorem (see \cite{mattila3})
$$
\zeta_n:=\int |\mathcal{S}_{n, \theta}|\,d\theta\rightarrow 0,\,\, n\rightarrow \infty\,.
$$

 \noindent{\bf Question 2}. What is the order of magnitude of $\zeta_n$?

 \bigskip

 This is the same question, which we had for $4$-corner Cantor set.

 Absolutely the same reasoning as above proves

 \begin{theorem}
 \label{Sier}
\[ \zeta_n = \int |\mathcal{S}_{n, \theta}|\,d\theta \ge c\frac{\log n}{n}\,.\]
 \end{theorem}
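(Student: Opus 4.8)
The plan is to transport the proof of Theorem~\ref{th-corner} to the triadic setting, replacing the base~$4$ by base~$3$ throughout. The entire argument for the four-corner set rests on two structural facts: the existence of a distinguished axis $0X$ onto which the projections of the level-$n$ squares are \emph{disjoint} and generate a $4$-adic structure, and the crucial inequality \eqref{crucial} bounding the transverse displacement $|y_{s_1}-y_{s_2}|$ by the rescaled adic distance $d(s_1,s_2)$. So the only genuinely new task is to exhibit the analogous good axis for $\mathcal{S}$; after that, every later step is a mechanical substitution of $3$ for $4$.

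Here the equilateral geometry makes the first step easier than in the square case. I would take $0X$ to be the direction of the base of the triangle (equivalently, perpendicular to its axis of symmetry) and $0Y$ orthogonal to it. The three retained corner triangles are upward-pointing scaled copies whose centroids have horizontal coordinates $\tfrac16,\tfrac12,\tfrac56$, so their projections onto $0X$ are the three adjacent intervals $[0,\tfrac13],[\tfrac13,\tfrac23],[\tfrac23,1]$, which tile $[0,1]$ with disjoint interiors. By self-similarity this tiling persists at every scale: the $3^n$ triangles of $\mathcal{S}_n$ project onto $0X$ to $3^n$ disjoint adjacent intervals of length $3^{-n}$, producing a clean $3$-adic structure on $I_0=[0,1]$. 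One then lets $S$ be the $3^n$ projected centroids, $y_s$ the $0Y$-coordinate of the corresponding triangle, and $d(s_1,s_2)=\min\{|I_\sigma|: s_1,s_2\in I_\sigma\}$ exactly as before. The analog of \eqref{crucial}, namely $|y_{s_1}-y_{s_2}|\le C' d(s_1,s_2)$, is again immediate, since both points lie in a common smallest triangle whose size is $\asymp d(s_1,s_2)$ and whose vertical extent is comparable to its size.

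With these two ingredients the combinatorial core copies over verbatim with $4\rightarrow 3$. I would call $P=(s_1,s_2)$ a $(j,k)$-pair when $|s_1-s_2|/|y_{s_1}-y_{s_2}|\asymp 3^{-j}$ and $|s_1-s_2|\asymp 3^{-k-j}$, for $j=0,\dots,\log_3 n$ and $k\in[-n+j,0]$, obtain the single-pair bound $p_P\le C\,3^{-n}\cdot 3^{-n}/|y_{s_1}-y_{s_2}|$, and bound the number of such pairs by $A_{j,k}\le C\,3^{2n-k-2j}$, using the crucial inequality to force both points into small neighborhoods of low-generation $3$-adic points. Summing gives $\sum_{P\in(j,k)}p_P\le C\,3^{-2j}$ and hence $\sum_k\sum_{(j,k)}p_P\le C\,n\,3^{-2j}$. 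With the angle windows $J_j=[c_1 3^{-j},c_2 3^{-j}]$ measured from $0X$, the geometric observation \eqref{crucialgeometric} again shows that any distinct pair whose projections overlap for some $\theta\in J_j$ is a $j$-pair, so $\int_{J_j}\int f_{n,\theta}^2\le C\,n\,3^{-2j}$ while $\int_{J_j}\int f_{n,\theta}\asymp 3^{-j}$; the double Cauchy--Schwarz of \eqref{sn} then yields $\int_{J_j}|\mathcal{S}_{n,\theta}|\,d\theta\ge c/n$.

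Finally I would sum over $j=0,\dots,\lfloor\log_3 n\rfloor$, collecting $\asymp\log n$ disjoint angle windows each contributing $\ge c/n$, to get $\zeta_n\ge c\,\frac{\log n}{n}$. The only step demanding any real (and purely elementary) verification is the construction of the distinguished axis and its disjoint $3$-adic tiling; I expect no serious obstacle there, precisely because the symmetry of the equilateral triangle hands us an even cleaner adic structure than the $\arctan\frac12$ axis provides in the square case. As in the Remark following \eqref{Pj}, the estimate breaks down once $j\gtrsim\log_3 n$, which is exactly what caps the bound at $\frac{\log n}{n}$.
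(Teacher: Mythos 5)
Your proposal is correct and coincides with the paper's own proof, which consists precisely of the observations you make: the projections of the triangles onto the base side tile it and generate a $3$-adic lattice, the analogues of \eqref{crucial} and \eqref{crucialgeometric} hold, and the argument of Theorem \ref{th-corner} then repeats verbatim with $4$ replaced by $3$. Your explicit verification of the disjoint tiling via the centroids at $\frac16,\frac12,\frac56$ is a correct filling-in of a detail the paper leaves implicit.
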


 In fact, projection of the triangles on the base side generate $3$-adic lattice on the base side. Then we notice that \eqref{crucial} and \eqref{crucialgeometric} hold now as well.
 The proof is the same after these observations.

 \bigskip


  \bibliographystyle{amsplain}

\end{document}